\documentclass[a4paper, leqno,12pt]{amsart}

\usepackage{amssymb, amsmath, amsthm,relsize}
\usepackage{xargs} 
\usepackage{hyperref} 
\usepackage[colorinlistoftodos,textsize=small]{todonotes} 
\usepackage{subcaption,graphicx,xcolor}
\usepackage{forest}

\numberwithin{equation}{section}
\newtheorem{thm}{Theorem}

\newtheorem{cor}[thm]{Corollary}
\newtheorem{lemma}[thm]{Lemma}

\theoremstyle{remark}
\newtheorem{ex}[thm]{Example}
\newtheorem*{theorem*}{Theorem}

\newtheorem{definition}{Definition}
 




	
         \newcommand{\p}{\mathfrak{p}}



        \newcommand{\McC}{\raise.5ex\hbox{c}}

\title[Rational inner functions, matrices, and singularities]{A note on polydegree $(n,1)$ rational inner functions, slice matrices, and singularities}

\author[Sola]{Alan Sola}
\address{Department of Mathematics, Stockholm University, Stockholm, S-106 91, Sweden}
\email{sola@math.su.se}

\date{\today}

\subjclass[2010]{32A40 (primary);  30J10 (secondary).}
\keywords{Rational inner function, derivative integrability, slice matrix}
\thanks{Partially supported by National Science Foundation Grant DMS
1928930 while participating in a spring 2022 program hosted by MSRI in Berkeley, CA}

\begin{document}

\begin{abstract} 
We analyze certain compositions of rational inner functions in the unit polydisk $\mathbb{D}^{d}$ with polydegree $(n,1)$, $n\in \mathbb{N}^{d-1}$, and isolated singularities in $\mathbb{T}^d$. Provided an irreducibility condition is met, such a composition is shown to be a rational inner function with singularities in precisely the same location as those of the initial function, and with quantitatively controlled properties. As an application, we answer a $d$-dimensional version of a question posed in \cite{BPS22} in the affirmative.
 \end{abstract}
\maketitle

\section{Introduction}
\subsection*{Background}
This note is concerned with certain bounded holomorphic functions on the unit polydisk in $\mathbb{C}^d$,  
\[\mathbb{D}^d=\{z=(z_1,\ldots, z_d)\in \mathbb{C}^d\colon |z_j|<1,\, \, j=1, \ldots, d\},\]
called rational inner functions, and their singularities on the $d$-torus 
\[\mathbb{T}^d=\{\zeta=(\zeta_1,\ldots, \zeta_d)\in \mathbb{C}^d\colon |\zeta_j|=1, \,\,j=1, \ldots, d\};\]
here and throughout, $d\in \mathbb{N}$. By Fatou's theorem for polydisks (see e.g. \cite[Chapter 3]{Rud69}), any bounded holomorphic function $\phi\colon \mathbb{D}^d\to \mathbb{C}$ has non-tangential boundary values 
$\phi^*(\zeta)=\angle \lim_{\mathbb{D}^d\ni z\to \zeta}\phi(z)$ at almost every point $\zeta \in \mathbb{T}^d$. If these boundary values satisfy $|\phi^*(\zeta)|=1$ for almost every $\zeta \in \mathbb{D}^d$, we say that $\phi$ is an inner function.

Inner functions of the form $\phi=q/p$, where $q,p \in \mathbb{C}[z_1,\ldots, z_d]$ and $p$ has no zeros in $\mathbb{D}^d$, are called rational inner functions (RIFs). In one variable, RIFs are precisely the finite Blaschke products in the unit disk $\mathbb{D}$. Blaschke products play a central role in function theory, see for instance \cite{GMR18} for an overview of the very rich theory of these functions. In two and more variables, RIFs are a concrete class of bounded holomorphic functions that is amenable to detailed study \cite[Chapter 5]{Rud69}, and appears naturally in several setting, for instance in connections with interpolation problems \cite{AM02}.

A classical result of Rudin and Stout (see \cite[Chapter 5]{Rud69}) states that any RIF in $\mathbb{D}^d$ admits a representation of the form 
\begin{equation}
\phi(z)=e^{ia}z^m\frac{\tilde{p}(z)}{p(z)},
\label{RSrep}
\end{equation}
where $a\in \mathbb{R}$, $m=(m_1,\ldots, m_d)\in \mathbb{N}^d$, and $\tilde{p}$ is the reflection of a polynomial $p$ with no zeros in $\mathbb{D}^d$ known as a stable polynomial. The reflection polynomial is defined as
\[\tilde{p}(z)=z_1^{n_1}\cdots z_d^{n_d}\bar{p}\left(\frac{1}{\bar{z}_1}, \ldots, \frac{1}{\bar{z}_d}\right).\]
The vector $(n_1, \ldots, n_d)$ is referred to as the polydegree of $p$; each $n_j=\deg_{z_j}(p)$ is the degree of $p$ in the variable $z_j$. In this note, we shall strip out monomial factors and consider RIFs $\phi=e^{ia}\tilde{p}/p$; this simplifies formulas and is not material for the problem we study.

RIFs as well as more general bounded rational functions in two or more variables have been considered by a number of authors in recent years, often in connection with stable polynomials, representation formulas, and operator-theoretic problems. We cannot give a full overview here, but a sampler of related work might include papers of Anderson, Dritschel, and Rovnyak \cite{ADR08}; Ball, Sadosky, and Vinnikov \cite{BSV05}; Knese \cite{Kne11a, Kne11b, Kne11c}, and Koll\'ar \cite{KolPrep}.

A series of recent papers with Bickel and Pascoe\cite{BPS18, BPS20, BPS22}; Bickel, Knese, and Pascoe \cite{BKPS}; and Tully-Doyle \cite{ST22} deal with aspects of RIF theory that are particular to dimensions $d\geq 2$. Namely, unlike in one dimension, RIFs in two or more variables can have singularities on the $d$-torus, arising at points $\zeta\in \mathbb{T}^d$ where $p(\zeta)=0$ and $\tilde{p}(\zeta)=0$ vanish without having common factors that cancel out. A $d$-dimensional example (see \cite[Section 5]{Kne11c} and \cite[Example 2.5]{BPS22}) is given by 
\begin{equation}
\phi_d(z)=\frac{\tilde{p}(z)}{p(z)}=\frac{d\prod_{k=1}^dz_k-\sum_{j\in J}z_{j_1}\cdots z_{j_{d-1}}}{d-\sum_{k=1}^d z_k} \quad (d\geq 2)
\label{faveRIF}
\end{equation}
which has a singularity at $(1,\ldots, 1)\in \mathbb{T}^d$. Here, $J=\{(j_1, \ldots, j_{d-1})\in \mathbb{N}^d \colon 1\leq j_1<j_1<\cdots<j_{d-1}\leq d\}$. 

One would like to describe RIF singularities in detail, and there are different ways of doing this. The papers \cite{BPS18,BPS20,BPS22}, as well as \cite{BKPS}, investigate for which $\p \geq 1$ the partial derivative of a RIF has $\frac{\partial \phi}{\partial z_d} \in L^{\p}(\mathbb{T}^d)$. Roughly speaking, the smaller the maximal $\p$ for which integrability holds, the stronger the singularity of $\phi$; for the example \eqref{faveRIF}, the maximal integrability index is $\p=\frac{1}{2}(d+1)$; see \cite{BPS22} and \cite{BKPS} for comprehensive discussions. The paper \cite{BPS18} and the work of Bergqvist \cite{Bprep} also consider other notions of derivative integrability corresponding to norms of Dirichlet type. 

\subsection*{Overview of results}
The purpose of this short note is to present some straight-forward observations regarding $d$-variable RIFs of polydegree $(n,1)$, $n=(n_1,\ldots, n_{d-1}) \in \mathbb{N}^{d-1}$, 
and their singularities. This restricted class of functions is often singled as more amenable to analysis, see for instance \cite{Kne11a, BPS22, BCSMich}. If $\hat{\zeta}=(\zeta_1, \ldots \zeta_{d-1})\in \mathbb{T}^{d-1}$ is kept fixed and $\phi=\tilde{p}/p$ is a RIF in $\mathbb{D}^d$, the resulting one-variable function $\phi_{\hat{\zeta}}(z_d)$ is either a M\"obius transformation mapping the unit disk onto itself, or else is a unimodular constant. By encoding this fact in a $2\times 2$ matrix-valued function of $\hat{\zeta}$, and expressing the determinant of this matrix in terms of $\hat{\zeta}$-polynomials extracted from $p$ and $\tilde{p}$, we are able to read off certain geometric characteristics of such $\phi$.

This allows us to exhibit $d$-variable RIFs with prescribed singularity types, and hence derivative integrability properties, while keeping the $z_d$-degree of the resulting functions equal to $1$. As a specific application, we are able to answer a stronger version of \cite[Question 3]{BPS22} in the affirmative.

\section{Preliminaries}
\subsection*{Polydegree $(n,1)$ RIFs and their singularities}
Let $p$ be an irreducible stable polynomial in $\mathbb{D}^d$, the latter meaning that $\mathcal{Z}(p)=\{z\in \mathbb{C}^d\colon p(z)=0\}$ does not intersect $\mathbb{D}^d$. We assume throughout that $p$ has polydegree $(n,1)$ where $n=(n_1,\ldots, n_d)\in \mathbb{N}^{d-1}$ and that $p$ is atoral, which in this context means that $p$ and $\tilde{p}$ share no common factor, see \cite[Section 1.2]{BPS22}. Then we can decompose $p$ as a sum 
\begin{equation}
p(z)=p_1(z_1,\ldots, z_{d-1})+z_d\,p_2(z_1,\ldots, z_{d-1})
\label{pdecomp}
\end{equation}
where $p_1(\hat{z})$ and $p_2(\hat{z})$ are in $\mathbb{C}[z_1,\ldots, z_{d-1}]$, and similarly
\begin{equation}
\tilde{p}(z)=\tilde{p}_2(\hat{z})+z_d\,\tilde{p}_1(\hat{z}), \quad \tilde{p}_1, \tilde{p}_2\in \mathbb{C}[z_1,\ldots, z_{d-1}].
\label{tpdecomp}
\end{equation} 
As we are interested in singular RIFs $\phi=\tilde{p}/p$, we assume there exists at least one $\zeta \in \mathbb{T}^d$ such that $p(\zeta)=0$. A result of Pascoe \cite[Corollary 1.7]{P17} shows that if we assume $p$ is irreducible, then any zero $p$ on $\mathbb{T}^d$ gives rise to a singularity of $\phi$. We restrict attention to the class of such $p$ for which we have the additional property that $\mathcal{Z}(p)\cap \mathbb{T}^d$ is finite; we call the corresponding $\phi=\tilde{p}/p$ finite-singularity RIFs. 

\begin{definition}
Suppose $\phi=\tilde{p}/p$ is a finite-singularity RIF in $\mathbb{D}^d$ with a singularity at $(1,\ldots, 1)\in \mathbb{T}^d$. We say $\p^*\geq 1$ is a local $z_d$-derivative integrability index of $\phi$ if
\[\p^*=\sup_{\p\geq 1}\left\{\p\colon \frac{\partial \phi}{\partial z_d}\in L^{\p}_{\mathrm{loc}}(\mathbb{T}^d)\right\},\]
where each $L^{\p}_{\mathrm{loc}}(\mathbb{T}^d)$ is a standard local Lebesgue space on the $d$-torus.

The global $z_d$-derivative integrability index of $\phi$ is the maximum of all the local $z_d$-derivative integrability indices of the finite-singularity RIF $\phi$.
\end{definition}
Because of the argument principle, $\frac{\partial \phi}{\partial z_d}$ is integrable for any RIF so the assumption that $\p\geq 1$ is justified; see \cite{BPS18, Bprep} for details. In a similar way, we can define $z_j$-derivative indices. To keep this note as elementary as possible, we focus on the $z_d$-derivative integrability index of a $(n,1)$ RIF.

It is not a straight-forward task to determine local $z_j$-derivative indices of a $d$-variable RIF, or their global counterparts. Two-dimensional RIFs are much better understood than their general $d$-dimensional counterparts: for instance, the $z_1$ and $z_2$-derivative indices of a RIF coincide when $d=2$, but this is false when $d\geq 3$, and their values are determined by a geometric characteristic of $p$ at its zeros. See \cite{BPS18} and \cite{BKPS} for comprehensive presentations of the two-variable theory. 

As explained in \cite{BPS22}, the $z_d$-derivative integrability of a polydegree $(n,1)$ RIF $\phi$ is controlled by the rate at which the zero set of $\tilde{p}$ approaches $\mathbb{T}^d$ from inside $\mathbb{D}^d$. To make this statement precise, we return to the one-variable function $\phi_{\hat{\zeta}}$ and note that the $L^{\p}$ norm of the derivative of a M\"obius transformation is proportional to the distance to $\mathbb{T}$ of the point $\psi^0\in \mathbb{D}$ for which $\phi_{\hat{\zeta}}(\psi^0)=0$; see \cite[Lemma 4.2]{BPS20}. Solving $\tilde{p}(\hat{\zeta}, \psi^0)=0$ yields $\psi^0(\hat{\zeta})=-\tilde{p}_2(\hat{\zeta})/\tilde{p}_1(\hat{\zeta})$, where $\tilde{p}_1, \tilde{p}_2$ are the polynomials from \eqref{tpdecomp}. 

Therefore, we set
\[\rho_{\phi}(\hat{\zeta})=1-|\psi^0(\hat{\zeta})|^2=\frac{|\tilde{p}_1(\hat{\zeta})|^2-|\tilde{p}_2(\hat{\zeta})|^2}{|\tilde{p}_1(\hat{\zeta})|^2}.\]
Note that since $\phi$ was assumed to be a finite-singularity RIF, the polynomial $\tilde{p}_1$ has no zeros in $\mathbb{T}^{d-1}$; otherwise $\mathcal{Z}(\tilde{p})\cap \mathbb{T}^d$ would contain a vertical line \cite[Section 3]{BPS22}, which is impossible since zeros of $\tilde{p}$ on $\mathbb{T}^d$ are also zeros of $p$. Hence the vanishing of $\rho_{\phi}$ near a singularity is determined by the vanishing of its numerator. As a consequence of this discussion and \cite[Theorem 2.1]{BPS22}, we obtain the following criterion.
\begin{thm}\label{intcrit}
Suppose $\phi$ is a finite-singularity RIF with polydegree $(n,1)$ and a singularity at $(1,\ldots, 1) \in \mathbb{T}^d$. Then
$\frac{\partial \phi}{\partial z_d}\in L^{\p}_{\mathrm{loc}}(\mathbb{T}^d)$ at $(1,\ldots, 1)$ if and only if
\[\int_{U}\left[|\tilde{p}_1(\hat{\xi})|^2-|\tilde{p}_2(\hat{\zeta})|^2\right]^{1-\p}dm(\hat{\zeta})<\infty,\]
where $U\subset \mathbb{T}^{d-1}$ is any sufficiently small open set in $\mathbb{T}^{d-1}$ containing $(1,\ldots, 1)$.
\end{thm}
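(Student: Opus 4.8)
Since the statement is billed as a consequence of the preceding discussion together with \cite[Theorem 2.1]{BPS22}, my plan is to spell out that deduction. I would start from the slice function $\phi_{\hat\zeta}(z_d)$: for fixed $\hat\zeta\in\mathbb{T}^{d-1}$ this is a degree-one Blaschke factor (or a unimodular constant), and by the very definition of the partial derivative, $\partial\phi/\partial z_d$ restricted to $\{\hat\zeta\}\times\mathbb{T}$ is precisely $\phi_{\hat\zeta}'$. Writing the $L^{\p}$-integral over a polydisk-neighborhood $U\times\mathbb{T}$ of $(1,\ldots,1)$ as an iterated integral by Tonelli, I would invoke \cite[Lemma 4.2]{BPS20} to compare the inner integral $\int_{\mathbb{T}}|\phi_{\hat\zeta}'(z_d)|^{\p}\,dm(z_d)$ with $\rho_{\phi}(\hat\zeta)^{1-\p}$ (for $\p=1$ both are bounded, consistently with the argument principle). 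This yields
\[
\int_{U\times\mathbb{T}}\Big|\frac{\partial\phi}{\partial z_d}\Big|^{\p}\,dm \;\asymp\; \int_U \rho_{\phi}(\hat\zeta)^{1-\p}\,dm(\hat\zeta),
\]
which is the polydegree-$(n,1)$ form of \cite[Theorem 2.1]{BPS22}.

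The remaining step is to trade $\rho_{\phi}$ for the polynomial quantity $|\tilde p_1|^2-|\tilde p_2|^2$. From $\psi^0=-\tilde p_2/\tilde p_1$ we have $\rho_{\phi}(\hat\zeta)=(|\tilde p_1(\hat\zeta)|^2-|\tilde p_2(\hat\zeta)|^2)/|\tilde p_1(\hat\zeta)|^2$. Because $\phi$ is a finite-singularity RIF, $\tilde p_1$ does not vanish on $\mathbb{T}^{d-1}$; after shrinking $U$ I may assume $0<c\leq|\tilde p_1(\hat\zeta)|\leq C<\infty$ on $\overline U$, so $|\tilde p_1|^{2(\p-1)}$ is bounded above and below by positive constants there. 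Hence
\[
\int_U \rho_{\phi}^{\,1-\p}\,dm=\int_U \big[|\tilde p_1|^2-|\tilde p_2|^2\big]^{1-\p}\,|\tilde p_1|^{2(\p-1)}\,dm\;\asymp\;\int_U \big[|\tilde p_1|^2-|\tilde p_2|^2\big]^{1-\p}\,dm,
\]
and the two integrals are simultaneously finite or infinite, which is exactly the asserted equivalence.

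The main difficulty is not a single computation but making sure the cited results apply as stated. Concretely, I need \cite[Lemma 4.2]{BPS20} to deliver the comparison $\int_{\mathbb{T}}|\phi_{\hat\zeta}'|^{\p}\,dm\asymp\rho_{\phi}(\hat\zeta)^{1-\p}$ with constants \emph{uniform} over $\hat\zeta\in U$; this is where the hypothesis $\p\geq 1$ (in fact $\p>1/2$) is used, since for a degree-one Blaschke factor with zero at distance $\delta$ from $\mathbb{T}$ the estimate $\int_{\mathbb{T}}|B'|^{\p}\,dm\asymp\delta^{1-\p}$ only holds in that range. I also need to justify localizing to a neighborhood of the single singularity $(1,\ldots,1)$: since $\mathcal{Z}(p)\cap\mathbb{T}^d$ is finite and $\tilde p_1$ is zero-free on $\mathbb{T}^{d-1}$, the function $\rho_{\phi}$ is bounded away from $0$ outside any fixed neighborhood of the singularities, so $L^{\p}_{\mathrm{loc}}$-integrability at $(1,\ldots,1)$ is governed entirely by the integrand on $U$, and one should check the conclusion is independent of which sufficiently small $U$ is chosen. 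Finally, the degenerate case $\p=1$ deserves a word: there both sides of the equivalence hold automatically, by the argument principle on the left and because the integrand is bounded on the right.
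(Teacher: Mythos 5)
Your proposal is correct and follows exactly the route the paper intends: the paper derives this theorem from the preceding discussion (the Möbius-slice estimate from \cite[Lemma 4.2]{BPS20}, the identity $\rho_{\phi}=(|\tilde p_1|^2-|\tilde p_2|^2)/|\tilde p_1|^2$, and the non-vanishing of $\tilde p_1$ on $\mathbb{T}^{d-1}$) together with \cite[Theorem 2.1]{BPS22}, which is precisely what you spell out. Your additional care about uniformity of constants, localization away from the other finitely many singularities, and the $\p=1$ case only makes the sketch more complete.
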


\subsection*{Polydegree $(n,1)$ rational inner functions and $2\times 2$ matrices}
Suppose $\phi=\tilde{p}/p$ is a finite-singularity RIF of polydegree $(n,1)$, and consider, for $\hat{\zeta}\in \mathbb{T}^{d-1}$ fixed, the one-variable function 
\[\phi_{\hat{\zeta}}(z_d)=\phi(\hat{\zeta},z_d).\]
Then, $\phi_{\hat{\zeta}}(z_d)$ is a rational function in $\mathbb{D}$, which attains unimodular boundary values at every point $\zeta_d \in \mathbb{T}$ by a theorem of Knese \cite[Theorem C]{Kne15}. Hence $\phi_{\hat{\zeta}}$ is either a M\"obius transformation of the unit disk, or else $\phi_{\hat{\zeta}}(z_d)$ is constant, and equal to some element of $\mathbb{T}$. The former obtains generically, but the latter possibility certainly occurs on some exceptional sets, as can be checked by considering $\phi_d(1,\ldots, 1,\zeta_d)$, where $\phi_d$ is the function in \eqref{faveRIF}.

Guided by this discussion, we make the following definition.
\begin{definition}\label{matrixdef}
The slice matrix of $\phi$ is the function $M_{\phi}\colon \mathbb{T}^{d-1}\to M_{2,2}(\mathbb{C})$ given by
\[M_{\phi}(\hat{\zeta})=\left(\begin{array}{cc}\tilde{p}_1(\hat{\zeta}) & \tilde{p}_2(\hat{\zeta})\\ p_2(\hat{\zeta}) & p_1(\hat{\zeta}) \end{array}\right).\]
The slice determinant of $\phi$ is the function $P_{\phi}\colon \mathbb{T}^{d-1}\to \mathbb{C}$ given by 
\[P_{\phi}(\hat{\zeta})=\det M_{\phi}(\hat{\zeta}).\]
\end{definition}
Formally, the numerator and the denominator of $\phi_{\hat{\zeta}}(z_d)$ can be read off from $M_{\phi}(\hat{\zeta})(z_d,\,\, 1)^T$. The slice determinant allows us to detect singularities of $\phi$ as well as their finer properties.

\begin{lemma}
The function $\phi_{\hat{\xi}}$ is constant if and only if $P_{\phi}(\hat{\xi})=0$, and this happens if and only if $(\hat{\xi}, \eta)$ is a singularity of $\phi$ for some value of $\eta\in \mathbb{T}$.
Moreover, $\frac{\partial \phi}{\partial z_d}\in L^{\p}_{\mathrm{loc}}(\mathbb{T}^d)$ at $(\hat{\xi},\eta)$ if and only if $\int_{B_{\epsilon}(\hat{\xi})} |P_{\phi}(\hat{\zeta})|^{1-\p}dm(\hat{\zeta})<\infty$ for sufficiently small $\epsilon>0$. 
\end{lemma}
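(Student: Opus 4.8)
The plan is to reduce both statements to one reflection identity for the slice polynomials on $\mathbb{T}^{d-1}$, together with Theorem~\ref{intcrit} and the facts already recorded before it. Write $\hat\zeta^{\,n}=\zeta_1^{n_1}\cdots\zeta_{d-1}^{n_{d-1}}$. Inserting the decompositions \eqref{pdecomp} and \eqref{tpdecomp} into $\tilde p(z)=z^{(n,1)}\overline{p(1/\bar z)}$ and matching the $z_d^0$- and $z_d^1$-coefficients gives $\tilde p_j(\hat z)=\hat z^{\,n}\,\overline{p_j(1/\bar{\hat z})}$ for $j=1,2$; on the torus $1/\bar\zeta_k=\zeta_k$, so
\[
\tilde p_j(\hat\zeta)=\hat\zeta^{\,n}\,\overline{p_j(\hat\zeta)},\qquad |\tilde p_j(\hat\zeta)|=|p_j(\hat\zeta)|,\qquad \hat\zeta\in\mathbb{T}^{d-1}.
\]
In particular, as recalled before Theorem~\ref{intcrit}, $\tilde p_1$ — hence also $p_1$ — has no zeros on $\mathbb{T}^{d-1}$, and
\[
P_\phi(\hat\zeta)=\tilde p_1(\hat\zeta)p_1(\hat\zeta)-\tilde p_2(\hat\zeta)p_2(\hat\zeta)=\hat\zeta^{\,n}\bigl(|p_1(\hat\zeta)|^2-|p_2(\hat\zeta)|^2\bigr),\qquad \hat\zeta\in\mathbb{T}^{d-1}.
\]
Since $\psi^0(\hat\zeta)=-\tilde p_2(\hat\zeta)/\tilde p_1(\hat\zeta)$ is the zero in $\overline{\mathbb{D}}$ of the Möbius-or-constant slice $\phi_{\hat\zeta}$, one has $|\tilde p_2(\hat\zeta)|\le|\tilde p_1(\hat\zeta)|$, i.e.\ $|p_2(\hat\zeta)|\le|p_1(\hat\zeta)|$; therefore $|P_\phi(\hat\zeta)|=|\tilde p_1(\hat\zeta)|^2-|\tilde p_2(\hat\zeta)|^2\ge 0$ on $\mathbb{T}^{d-1}$.

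For the first assertion I would write $\phi_{\hat\xi}(z_d)=\bigl(\tilde p_1(\hat\xi)z_d+\tilde p_2(\hat\xi)\bigr)/\bigl(p_2(\hat\xi)z_d+p_1(\hat\xi)\bigr)$. The denominator is not the zero polynomial in $z_d$, because $p_1(\hat\xi)\neq 0$; hence this linear fractional expression is a constant precisely when the vectors $(\tilde p_1(\hat\xi),\tilde p_2(\hat\xi))$ and $(p_2(\hat\xi),p_1(\hat\xi))$ are parallel, equivalently when $\det M_\phi(\hat\xi)=P_\phi(\hat\xi)=0$, and otherwise it is a genuine degree-one rational function, which by \cite[Theorem C]{Kne15} is a Möbius self-map of $\mathbb{D}$. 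This proves $\phi_{\hat\xi}$ constant $\iff P_\phi(\hat\xi)=0$. For the link with singularities: if $P_\phi(\hat\xi)=0$ then $|p_1(\hat\xi)|=|p_2(\hat\xi)|$ by the displayed formula, and both are nonzero (if $p_2(\hat\xi)=0$ then $p_1(\hat\xi)=0$, impossible); thus $\eta:=-p_1(\hat\xi)/p_2(\hat\xi)\in\mathbb{T}$ and $p(\hat\xi,\eta)=p_1(\hat\xi)+\eta p_2(\hat\xi)=0$, so $(\hat\xi,\eta)\in\mathcal Z(p)\cap\mathbb{T}^d$, which is a singularity of $\phi$ by irreducibility and \cite[Corollary~1.7]{P17}. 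Conversely, a singularity $(\hat\xi,\eta)$ forces $p_1(\hat\xi)=-\eta p_2(\hat\xi)$ with $|\eta|=1$, hence $|p_1(\hat\xi)|=|p_2(\hat\xi)|$ and $P_\phi(\hat\xi)=0$. (The zero $\eta$ of the degree-one polynomial $p(\hat\xi,\cdot)$ is unique, so this also identifies $\{P_\phi=0\}$ bijectively with the finite singularity set, justifying the phrase ``sufficiently small $\epsilon$'' below.)

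For the second assertion, note that applying the torus rotation $z\mapsto(\xi_1 z_1,\ldots,\xi_{d-1}z_{d-1},\eta z_d)$ carries the singularity $(\hat\xi,\eta)$ to $(1,\ldots,1)$, preserves the hypotheses (stable, irreducible, atoral, polydegree $(n,1)$, finite singularities), and multiplies $P_\phi$ by a unimodular monomial factor, hence does not affect the finiteness of the integral in question; so we may as well invoke Theorem~\ref{intcrit} at $(\hat\xi,\eta)$ directly. It gives $\partial\phi/\partial z_d\in L^{\p}_{\mathrm{loc}}(\mathbb{T}^d)$ at $(\hat\xi,\eta)$ if and only if $\int_{U}\bigl[|\tilde p_1(\hat\zeta)|^2-|\tilde p_2(\hat\zeta)|^2\bigr]^{1-\p}\,dm(\hat\zeta)<\infty$ for a sufficiently small neighbourhood $U$ of $\hat\xi$ in $\mathbb{T}^{d-1}$; taking $U=B_\epsilon(\hat\xi)$ and using the identity $|P_\phi(\hat\zeta)|=|\tilde p_1(\hat\zeta)|^2-|\tilde p_2(\hat\zeta)|^2$ from the first paragraph, this is exactly $\int_{B_\epsilon(\hat\xi)}|P_\phi(\hat\zeta)|^{1-\p}\,dm(\hat\zeta)<\infty$ for small $\epsilon>0$.

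The only delicate points are bookkeeping: getting the reflection identity $\tilde p_j(\hat\zeta)=\hat\zeta^{\,n}\overline{p_j(\hat\zeta)}$ and the conjugation conventions right, and ruling out the degeneracies $p_1(\hat\xi)=0$ (already excluded for finite-singularity RIFs) and $p_2(\hat\xi)=0$ (excluded using $p_1\neq 0$ together with $|p_1(\hat\xi)|=|p_2(\hat\xi)|$ whenever $P_\phi(\hat\xi)=0$). Everything else is a direct consequence of Definition~\ref{matrixdef}, Theorem~\ref{intcrit}, and the cited results.
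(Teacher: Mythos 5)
Your proposal is correct, and for two of the three assertions it follows the paper's own route: the constancy criterion via the $ad-bc$ determinant test for M\"obius maps, and the integrability criterion via the computation $P_\phi(\hat\zeta)=\hat\zeta^{\,n}\bigl(|\tilde p_1(\hat\zeta)|^2-|\tilde p_2(\hat\zeta)|^2\bigr)$ followed by an appeal to Theorem~\ref{intcrit}. The one place where you genuinely diverge is the middle assertion linking $P_\phi(\hat\xi)=0$ to the existence of a singularity on the slice $\{\hat\xi\}\times\mathbb{T}$: the paper disposes of this by citing \cite[Lemma 3.3]{BCSMich}, whereas you give a short self-contained argument --- $P_\phi(\hat\xi)=0$ forces $|p_1(\hat\xi)|=|p_2(\hat\xi)|\neq 0$, so $\eta=-p_1(\hat\xi)/p_2(\hat\xi)$ is the unique unimodular zero of $p(\hat\xi,\cdot)$, and Pascoe's result \cite[Corollary 1.7]{P17} (already quoted in the preliminaries) upgrades this torus zero of the irreducible $p$ to a singularity of $\phi$; the converse is immediate. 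This buys independence from the Clark-measure machinery of \cite{BCSMich} at the cost of a few lines, and it also makes explicit the bijection between zeros of $P_\phi$ and singularities. Two further points where you are more careful than the written proof: you record the reflection identity $\tilde p_j(\hat\zeta)=\hat\zeta^{\,n}\overline{p_j(\hat\zeta)}$ needed to pass from $\tilde p_1p_1-\tilde p_2p_2$ to $|\tilde p_1|^2-|\tilde p_2|^2$ (the paper's displayed computation silently uses $|p_j|=|\tilde p_j|$ on the torus), and you insert the rotation reducing a general singularity $(\hat\xi,\eta)$ to $\vec 1$ so that Theorem~\ref{intcrit}, as stated, actually applies. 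Both are legitimate and welcome tightenings; no gaps.
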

\begin{proof}
The first assertion is a direct consequence of the following well-known facts that, for $a,b,c,d$ complex, $\mathfrak{m}(z)=(az+b)/(cz+d)$ furnishes a non-trivial M\"obius transformation of the Riemann sphere if and only if $ad-bc\neq 0$; if $ad-bc=0$ then $\mathfrak{m}$ is constant. See \cite{JSbook} for a comprehensive treatment of M\"obius transformations and their connections with matrix groups.

The second assertion is a consequence of the results in \cite[Subsection 3.2]{BCSMich}, see in particular \cite[Lemma 3.3]{BCSMich}. 

The third assertion essentially amounts to a computation. Namely,
\[\det M_{\phi}(\hat{\zeta})=\tilde{p}_1(\hat{\zeta})p_1(\hat{\zeta})-\tilde{p}_2(\hat{\zeta})p_2(\hat{\zeta}).\]
Observing that $\zeta_j=1/\bar{\zeta}_j$, $j=1, \ldots, d-1$, and examining the definition of reflection polynomials, the expression on the right-hand side can be rewritten (in standard multi-index notation) as
\[\hat{\zeta}^n\bar{p}_1(\hat{\zeta})p_1(\hat{\zeta})-\hat{\zeta}^n\bar{p}_2(\hat{\zeta})p_2(\hat{\zeta})=\hat{\zeta}^n\left(|\tilde{p}_1(\hat{\zeta})|^2-|\tilde{p}_2(\hat{\zeta})|^2\right).\]
The result now follows after taking moduli and appealing to Theorem \ref{intcrit}.
\end{proof}

\section{Compositions and local properties of singularities}
Given an $(n,1)$ finite-singularity RIF, we define the following sequence of functions. See \cite{ST22} for a fuller study of dynamical properties of mappings, especially skew-products, whose components are RIFs.
\begin{definition}\label{iterdef}
Let $\phi=\tilde{p}/p$ be a finite-singularity RIF of polydegree $(n,1)$. Then $\phi^2\colon \mathbb{D}^d\to \mathbb{C}$ is defined as
\[\phi^2(z)=(\phi_{\hat{\zeta}}\circ \phi_{\hat{\zeta}})(z_d), \quad (\hat{z},z_d)\in \mathbb{D}^d.\]
For any $N\in \mathbb{N}$ with $N\geq 3$, $\phi^N$ is defined inductively as $\phi_{\hat{\zeta}}\circ \phi^{N-1}_{\hat{\zeta}}$.
\end{definition}
The functions $\phi^N$ are clearly rational and holomorphic in $\mathbb{D}^d$. As can be seen from \eqref{pdecomp} and \eqref{tpdecomp}, the $z_j$-degree of $\phi^N$ is at most $N\cdot n_j$ for $j=1,\ldots, d-1$, and $\deg_{z_d}(\phi^N)\leq 1$. One complication that may arise is that the numerator and the denominator of the composite function may initially share a common factor. We always assume any such factors present are cancelled, in which case we get a polydegree drop in $\phi^N$.

\begin{lemma}\label{lem:samesings}
Suppose $\phi^N=\tilde{p}_N/p_N$ is as in Definition \ref{iterdef} and does not experience a polydegree drop. Then $\phi^N$ is a finite-singularity RIF with the same singularities as $\phi$.
\end{lemma}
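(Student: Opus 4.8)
The plan is to reduce everything to a statement about the slice matrix and then invoke the previous lemma. First I would observe that since $\phi_{\hat\zeta}$ is, for each fixed $\hat\zeta\in\mathbb{T}^{d-1}$, a one-variable map of $\mathbb{D}$ into itself (a Möbius transformation of $\mathbb{D}$ or a unimodular constant), the composition $\phi^N_{\hat\zeta}$ is again such a map; in particular $\phi^N$ is bounded and holomorphic on $\mathbb{D}^d$, and its slices attain unimodular boundary values at every $\zeta_d\in\mathbb{T}$, so $\phi^N$ is inner by Fatou. Being the quotient $\tilde p_N/p_N$ of polynomials with $p_N$ zero-free in $\mathbb{D}^d$ (no zeros because the slices never blow up inside $\mathbb{D}$), it is a RIF, and after cancelling common factors — which by hypothesis does not force a polydegree drop — it retains polydegree $(Nn,1)$, hence $\deg_{z_d}=1$.

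The core computation is to track how the slice matrix transforms under iteration. Writing $\phi_{\hat\zeta}(z_d)=\frac{\tilde p_1(\hat\zeta)z_d+\tilde p_2(\hat\zeta)}{p_2(\hat\zeta)z_d+p_1(\hat\zeta)}$, the matrix $M_\phi(\hat\zeta)$ represents this Möbius transformation; composition of Möbius transformations corresponds to matrix multiplication, so up to scalar factors $M_{\phi^N}(\hat\zeta)$ equals $M_\phi(\hat\zeta)^N$. I would make this precise by defining $M_N(\hat\zeta):=M_\phi(\hat\zeta)^N$, checking inductively that the entries of $M_N$ are polynomials in $\hat\zeta$ whose top-left/bottom-right and off-diagonal entries play the roles of $\tilde p_1^{(N)},p_1^{(N)}$ and $\tilde p_2^{(N)},p_2^{(N)}$ respectively, and verifying that the numerator and denominator of $\phi^N_{\hat\zeta}(z_d)$ read off from $M_N(\hat\zeta)(z_d,1)^T$ agree with $\tilde p_N$ and $p_N$ up to a common $\hat\zeta$-polynomial factor — which is exactly the (possibly trivial) common factor that may be cancelled. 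The key consequence is the determinant identity
\[
P_{\phi^N}(\hat\zeta)=c(\hat\zeta)\,\det\!\big(M_\phi(\hat\zeta)^N\big)=c(\hat\zeta)\,P_\phi(\hat\zeta)^N,
\]
where $c(\hat\zeta)$ is a nonvanishing correction accounting for cancelled factors, or — absent a polydegree drop — simply $c\equiv 1$ up to the unimodular constant in the Rudin–Stout representation.

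Granting the identity $P_{\phi^N}(\hat\zeta)=$ (nonzero factor)$\cdot P_\phi(\hat\zeta)^N$, the previous lemma finishes the argument: $P_{\phi^N}(\hat\xi)=0$ exactly when $P_\phi(\hat\xi)=0$, so $\phi^N$ has a singularity over $\hat\xi$ if and only if $\phi$ does; since $P_\phi$ has only finitely many zeros in $\mathbb{T}^{d-1}$ (as $\phi$ is a finite-singularity RIF), the same holds for $P_{\phi^N}$, so $\phi^N$ is a finite-singularity RIF with precisely the same singular slices — and, $\deg_{z_d}\phi^N=1$ meaning there is exactly one $\eta\in\mathbb{T}$ over each singular $\hat\xi$, the same singularities as $\phi$. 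I expect the main obstacle to be bookkeeping the scalar/common-factor ambiguity cleanly: one must argue that the entries of $M_\phi(\hat\zeta)^N$, before cancellation, really are the decomposition polynomials of a genuine polynomial numerator and denominator of $\phi^N$ (i.e. that the "matrix composition" and the "compose the rational functions and clear denominators" procedures differ only by an overall polynomial factor), and that the no-polydegree-drop hypothesis is precisely what makes that factor harmless. Everything else is a direct invocation of the lemma on slice determinants.
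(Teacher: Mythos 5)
Your proposal is correct and follows essentially the same route as the paper: establish innerness slice-by-slice, identify the slice matrix of $\phi^N$ with the matrix power $M_\phi(\hat\zeta)^N$, use multiplicativity of the determinant together with the no-polydegree-drop hypothesis to conclude $P_{\phi^N}$ and $P_\phi$ have the same zero set, and use $\deg_{z_d}\phi^N=1$ to pin down the unique singular point over each singular $\hat\xi$. The only cosmetic difference is that the paper invokes Knese's theorem for innerness where you argue directly via the slices and Fatou, and your explicit bookkeeping of the common-factor ambiguity is, if anything, more careful than the paper's.
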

\begin{proof}
Since $\phi$ maps $\mathbb{T}^d$ onto $\mathbb{T}$, Knese's theorem implies that each $(\phi^N)^*$ is unimodular. Hence $\phi^N$ is inner. 

Next, for $\hat{\zeta}\in \mathbb{T}^{d-1}$, computing the slice matrix of $\phi^N_{\hat{\zeta}}$ amounts to taking the matrix power $M^N_{\phi}(\hat{\zeta})
=M_{\phi}(\hat{\zeta})\cdots M_{\phi}(\hat{\zeta})$, see \cite{JSbook}. The assumption that $\phi^N$ has full polydegree implies there are no common factors in the matrices that would be cancelled in $\phi^N$. Then, by multiplicativity of determinants, $\det M_{\phi}^N(\hat{\zeta})$ vanishes if and only if $\det M_{\phi}(\hat{\zeta})$ does. Thus, the $\hat{\zeta}$-coordinates of the singularities of $\phi^N$ are the same as those of $\phi$. Since $\phi^N$ has degree $1$ in $z_d$, and since $\phi$ has a singularity on the line $\{\hat{\zeta}\}\times \mathbb{T}$, each such $\hat{\zeta}$ determines a unique $\eta\in \mathbb{T}$ such that $(\hat{\zeta}, \eta)\in \mathbb{T}^d$ is a singularity of $\phi^N$. 
\end{proof}
The following example illustrates that common factors can be eliminated by rotating $\phi$ by a suitable factor $e^{ia}$, $a\in \mathbb{R}$, or in other words by replacing $\tilde{p}$ by $e^{ia}\tilde{p}$. Doing this only affects $P_{\phi}$ up to a unimodular factor. 
\begin{ex}
Consider $\phi=-(2z_1z_2-z_1-z_2)/(2-z_1-z_2)$. As is shown by induction in \cite[Example 1]{ST22}, each $\phi^N$, $N=1,2,3,\ldots$, has bidegree $(1,1)$.

Next, consider $\phi=(2z_1z_2-z_1-z_2)/(2-z_1-z_2)$. Then 
\[\psi=\phi^2=\frac{4z_1^2z_2-z_1^2-3z_1z_2-z_1+z_2}{4-3z_1-z_2-z_1z_2+z_1^2},\]
is a RIF that often features as a second example of a singular RIF on the bidisk; see for instance \cite{AMY12, BPS18, BPS20}. In particular, while $\frac{\partial \phi}{\partial z_2}\in L^{\p}(\mathbb{T}^2)$ if and only if $\p<3/2$, it was shown by direct computation in \cite[Example 2]{BPS18} that $\frac{\partial \psi}{\partial z_2} \in L^{\p}(\mathbb{T}^2)$ if and only if $\p<5/4$. We now give a conceptual explanation for this finding.
\end{ex}

\begin{thm}\label{thm:main}
Suppose $\phi=\tilde{p}/p$ is a finite-singularity RIF of polydegree $(n,1)$, with a singularity at $\vec{1}=(1,\ldots, 1)$. Suppose the local $z_d$-derivative integrability index of $\phi$ at $\vec{1}$ is equal to $\p^*=1+\mathfrak{q}^*$, where $\mathfrak{q}^*\geq 0$. 

If $N\in \mathbb{N}$ and $\phi^N$ has full polydegree, then the RIF $\phi^N=\tilde{p}_N/p_N$ has local $z_d$-derivative integrability index equal to $1+\mathfrak{q}^*/N$ near $\vec{1}$.
\end{thm}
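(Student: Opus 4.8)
The plan is to reduce the whole statement to the third assertion of the Lemma (the $z_d$-derivative integrability criterion in terms of the slice determinant) together with the multiplicativity of determinants. First I would invoke Lemma~\ref{lem:samesings}: since $\phi^N$ has full polydegree, $\phi^N=\tilde p_N/p_N$ is again a finite-singularity RIF, of polydegree $(Nn,1)$, whose singularity set coincides with that of $\phi$; in particular $\phi^N$ is singular over $\vec{1}=(1,\ldots,1)$, so its local $z_d$-derivative integrability index there is defined, and by the Lemma it equals
\[
\sup\Big\{\p\geq 1\colon \int_{B_\epsilon(\vec{1})}|P_{\phi^N}(\hat\zeta)|^{1-\p}\,dm(\hat\zeta)<\infty\Big\}
\]
for every sufficiently small $\epsilon>0$. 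By the same token the hypothesis on $\phi$ says exactly that the analogous supremum for $P_\phi$ equals $1+\mathfrak{q}^*$.

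The crux is the quantitative identity $|P_{\phi^N}(\hat\zeta)|=c\,|P_\phi(\hat\zeta)|^{N}$ on $\mathbb{T}^{d-1}$, for some constant $c>0$. This refines the determinant computation already used in the proof of Lemma~\ref{lem:samesings}: the one-variable map $\phi^N_{\hat\zeta}$ is represented by the matrix power $M_\phi(\hat\zeta)^N$, so the slice matrix $M_{\phi^N}(\hat\zeta)$ of the reduced RIF differs from $M_\phi(\hat\zeta)^N$ only by whatever common polynomial factor is cancelled in passing to lowest terms; the full-polydegree hypothesis forces that factor to be a monomial in $\hat\zeta$ times a constant (the same mechanism as in the discussion preceding the Example). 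Hence $P_{\phi^N}=\det M_{\phi^N}=\omega\cdot(\det M_\phi)^N=\omega\cdot P_\phi^{\,N}$ with $\omega$ a monomial times a unimodular constant, and on $\mathbb{T}^{d-1}$ one has $|\omega|\equiv c$ for some $c>0$.

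Granting this, the conclusion is a change of variable in the exponent. Write $\p=1+s$ with $s\geq 0$; shrinking $\epsilon$ so that $|P_\phi|\leq 1$ on $B_\epsilon(\vec{1})$, the map $s\mapsto|P_\phi|^{-s}$ is pointwise nondecreasing, so $T:=\{s\geq 0\colon \int_{B_\epsilon(\vec{1})}|P_\phi|^{-s}\,dm<\infty\}$ is a subinterval of $[0,\infty)$ containing $0$, and the hypothesis says $\sup T=\mathfrak{q}^*$. Since $\int_{B_\epsilon(\vec{1})}|P_{\phi^N}|^{-s}\,dm=c^{-s}\int_{B_\epsilon(\vec{1})}|P_\phi|^{-Ns}\,dm$, this integral is finite precisely when $Ns\in T$, that is, when $s\in\frac1N T$, a set with supremum $\mathfrak{q}^*/N$. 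Therefore the local $z_d$-derivative integrability index of $\phi^N$ at $\vec{1}$ equals $1+\mathfrak{q}^*/N$.

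The one genuinely non-formal step, and hence the main obstacle, is the middle paragraph: verifying that passing from $M_\phi(\hat\zeta)^N$ to the slice matrix of the \emph{reduced} RIF $\phi^N$ introduces only a factor unimodular on $\mathbb{T}^{d-1}$, i.e. correctly accounting for both the cancelled common polynomial factor and the monomial/unimodular ambiguity in the Rudin--Stout representation. This is precisely where the full-polydegree hypothesis is essential: if instead one rotates $\phi$ so that a nonconstant common factor does appear (as in the Example, where rotation turns every $\phi^N$ into a bidegree-$(1,1)$ RIF), the conclusion fails. Everything downstream of that identity is routine manipulation of the integrability criterion.
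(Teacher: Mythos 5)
Your proposal is correct and follows essentially the same route as the paper: Lemma \ref{lem:samesings} plus the identification of the slice matrix of $\phi^N$ with $M_\phi(\hat\zeta)^N$ under the full-polydegree hypothesis, multiplicativity of determinants to get $|P_{\phi^N}|\asymp|P_\phi|^N$ on $\mathbb{T}^{d-1}$, and then the slice-determinant integrability criterion to rescale the exponent by $N$. Your extra care in tracking the possible monomial/unimodular factor and in noting that the set of admissible exponents is an interval only tightens the paper's argument; it does not change the approach.
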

\begin{proof}
By Lemma \ref{lem:samesings}, $\phi^N$ is a RIF with the same singularities as $\phi$, and in particular $\phi^N$ has a singularity at $\vec{1}$.
Since $\tilde{p}_N$ and $p_N$ have no common factors that can be cancelled, the slice matrix of $\phi^N$ is equal to $M_{\phi}^N$, the $N$-fold power of the slice matrix of $\phi$. Hence the order of vanishing of the slice determinant of $\phi^N$ is equal to $N$ times the order of vanishing of the slice determinant of $\phi$. In other words, $\frac{\partial \phi}{\partial z_d}\in L^{\p}(\mathbb{T}^d)$ precisely when 
\[\int_{U}(|\tilde{p}_1^2(\hat{\zeta})|^2-|\tilde{p}_2(\hat{\zeta})|^2)^{N(1-\p)}dm(\hat{\zeta})\]
is finite for $U\supset \vec{1}$ sufficiently small. By our assumption on $\phi$, this holds if $N(1-\p)> -\mathfrak{q}^*$ and fails when $N(1-\p)<-\mathfrak{q}^*$, and the result follows.
\end{proof}
When $d=2$ and $\phi$ has a singularity at $(e^{i\eta_1},e^{i\eta_2})$, it can be shown that $1-|\psi^0(e^{i\theta_1})|^2\asymp (\theta_1-\eta_1)^{2K}$ for some $K\in \mathbb{N}$. The number $2K$ is called the $z_2$-contact order of $\phi$ at $(e^{i\eta_1},e^{i\eta_2})$; see \cite{BPS18,BPS20} for definitions and proofs. The assumption that $\phi$ has finitely many singularities becomes superfluous in two variables by B\'ezout's theorem, and we obtain the following.
\begin{cor}\label{thm:2main}
Suppose $\phi=\tilde{p}/p$ is a bidegree $(n_1,1)$ RIF in $\mathbb{D}^2$ with $s$ singularities having associated contact orders $\{2K_1, \ldots, 2K_s\}$, and suppose $\phi^N=\tilde{p}_N/p_N$ has full polydegree.

Then $\phi^N$ has $s$ singularities with contact orders $\{2NK_1, \ldots, 2NK_s\}$.
\end{cor}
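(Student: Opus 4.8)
The plan is to reduce the corollary to Theorem~\ref{thm:main} via the dictionary, recalled just before the statement, between the $z_2$-contact order of a bidegree $(m,1)$ RIF in $\mathbb{D}^2$ at a singularity and its local $z_2$-derivative integrability index there. First I would make that dictionary quantitative: if $\phi$ is a bidegree $(m,1)$ finite-singularity RIF with a singularity at $(e^{i\eta_1},e^{i\eta_2})$ of contact order $2K$, then $1-|\psi^0(e^{i\theta_1})|^2 \asymp (\theta_1-\eta_1)^{2K}$, and since $\tilde{p}_1$ has no zeros on $\mathbb{T}$ near $e^{i\eta_1}$ for a finite-singularity RIF, the denominator $|\tilde{p}_1(e^{i\theta_1})|^2$ of $\rho_{\phi}$ is bounded above and below, so $|\tilde{p}_1(e^{i\theta_1})|^2 - |\tilde{p}_2(e^{i\theta_1})|^2 \asymp (\theta_1-\eta_1)^{2K}$ as well. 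Substituting this into the integral of Theorem~\ref{intcrit} and observing that $\int_{|\theta_1-\eta_1|<\epsilon}(\theta_1-\eta_1)^{2K(1-\p)}\,d\theta_1$ converges exactly when $\p < 1+\frac{1}{2K}$, one concludes that the local $z_2$-derivative integrability index of $\phi$ at that singularity equals $1+\frac{1}{2K}$; in the notation of Theorem~\ref{thm:main}, $\mathfrak{q}^* = \frac{1}{2K}$.

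Next I would apply Theorem~\ref{thm:main}. In $\mathbb{D}^2$, B\'ezout's theorem forces an atoral stable $p$ to have only finitely many zeros on $\mathbb{T}^2$, so $\phi$ is automatically a finite-singularity RIF, as is $\phi^N$ (a bidegree $(Nn_1,1)$ RIF once full polydegree is assumed). By Lemma~\ref{lem:samesings}, $\phi^N$ has precisely the same $s$ singularities as $\phi$. Fixing the one with contact order $2K_j$, Theorem~\ref{thm:main} yields local $z_2$-derivative integrability index $1+\frac{\mathfrak{q}^*}{N} = 1+\frac{1}{2NK_j}$ for $\phi^N$ at that point. Running the dictionary of the first step in reverse---$\phi^N$ is itself a bidegree $(m,1)$ RIF, hence has a well-defined contact order $2K_j'$ there, and $1+\frac{1}{2K_j'} = 1+\frac{1}{2NK_j}$---forces $K_j' = NK_j$. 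Repeating for each $j=1,\ldots,s$ produces the asserted list $\{2NK_1,\ldots,2NK_s\}$.

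The one step requiring a little care is the reverse translation, since it presupposes that $\phi^N$ genuinely possesses contact orders at its singularities; this is the content of the two-variable structure theory of \cite{BPS18,BPS20} applied to the bidegree $(Nn_1,1)$ RIF $\phi^N$, and I expect it to be the only real subtlety. One can in fact sidestep it with a direct computation already available from the slice-matrix machinery: since no common factors are cancelled, $M_{\phi^N} = M_{\phi}^N$, so $P_{\phi^N} = (P_{\phi})^N$ by multiplicativity of the determinant, whence the computation carried out just after Definition~\ref{matrixdef} gives $|\tilde{p}_{1,N}(e^{i\theta_1})|^2 - |\tilde{p}_{2,N}(e^{i\theta_1})|^2 = \bigl(|\tilde{p}_1(e^{i\theta_1})|^2 - |\tilde{p}_2(e^{i\theta_1})|^2\bigr)^N$; dividing by $|\tilde{p}_{1,N}(e^{i\theta_1})|^2$, which is bounded above and below near $e^{i\eta_j}$, shows $\rho_{\phi^N}(e^{i\theta_1}) \asymp (\theta_1-\eta_1)^{2NK_j}$ directly, i.e.\ the contact order of $\phi^N$ at that singularity is $2NK_j$. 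I would present this direct argument, with the appeal to \cite{BPS18,BPS20} serving as conceptual context.
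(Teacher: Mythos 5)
Your proposal is correct and follows the route the paper intends: the corollary is presented as an immediate consequence of Theorem \ref{thm:main} via the dictionary between the contact order $2K$ at a singularity, the vanishing order of $|\tilde{p}_1|^2-|\tilde{p}_2|^2$ (equivalently of the slice determinant $P_{\phi}$), and the local integrability index $1+\frac{1}{2K}$, combined with $P_{\phi^N}=(P_{\phi})^N$ under the full-polydegree hypothesis. Your closing direct computation of $\rho_{\phi^N}\asymp(\theta_1-\eta_1)^{2NK_j}$ is exactly the content of the paper's (unwritten) argument, so there is nothing to add.
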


\section{Applications}
\subsection*{Finding extraneous zeros of two-variable RIF denominators}
In \cite{P18}, Pascoe presents a way of constructing two-variable RIFs with at least one singularity where the local contact order can be prescribed to take any value $2n$, $n\in \mathbb{N}$. (Strictly speaking, the construction is given in the setting of the bi-upper half-plane, but it can readily be transferred to the bidisk by means of conjugation by a suitable M\"obius map. See \cite[Section 7]{BPS20}.) In particular, any positive even integer is the contact order of some RIF in $\mathbb{D}^2$. 

However, Pascoe's construction may produce additional singularities in $\phi$ and, to the author's knowledge, does not appear give any immediate information about their location or nature. In principle, this can be addressed by finding all zeros of the two-variable denominator $p$, and then using the techniques in \cite{BPS20, BKPS} to determine the associated contact orders. By examining the matrix-valued function $\zeta_1\mapsto M_{\phi}(\zeta_1)$ we can detect any such extraneous singularities and determine their contact orders in a fairly simple way. First, we compute $P_{\phi}(\zeta_1)=\det M_{\phi}(\zeta_1)$ and find the zeros $\{\zeta_1^1, \ldots \zeta_1^s\}$ of the one-variable polynomial $P_{\phi}$ that are located on the unit circle. Plugging these values into $p$, we find the point $\zeta_2\in \mathbb{T}$ at which the polynomial $p(\zeta_1^j, z_2)$ vanishes as a function of $z_2$. Finally, the order of vanishing of $P_{\phi}$ gives us the $z_2$-contact order of $\phi$ at each singularity. By \cite[Section 4]{BPS20}, this is equal to the $z_1$-contact order of $\phi$ as well, allowing us to read off the derivative integrability of $\phi$ at each singularity.

\begin{ex}[Example 7.4 of \cite{BPS20}]
Consider the two-variable RIF 
\[\phi(z_1,z_2)=\frac{4z_1^3z_2+z_1^3-z_1^2+3z_1+1}{4+z_2-z_1z_2+3z_1^2z_2+z_1^3z_2},\]
which is obtained using Pascoe's method.
His construction guarantees that $\phi$ has a singularity at $(-1,-1)$ with contact order equal to $4$.
The slice matrix associated with $\phi$ is
\[M_{\phi}(\zeta_1)=\left(\begin{array}{cc} 4\zeta_1^3 & \zeta_1^3-\zeta_1^2+3\zeta_1+1\\ \zeta_1^3+3\zeta_1^2-\zeta_1+1 &4\end{array}\right)\]
and has determinant
\[P_{\phi}(\zeta_1)=\det M_{\phi}(\zeta_1)=-(\zeta_1-1)^2(\zeta_1+1)^4.\]
We immediately discern that $\phi$ has an additional singularity at $(1,-1)$, with contact order equal to $2$, as was checked in an ad hoc way in \cite{BPS20}.
\end{ex}

\subsection*{Further derivative integrability cutoffs of $d$-variable RIFs}
In \cite{BPS22}, a glueing construction from \cite[Section 7]{BPS20} was adapted to three variables and was used to exhibit a three-variable RIF with a single isolated singularity and worse derivative integrability properties than the three-variable instance of \eqref{faveRIF}. The drawback of that example is that the RIF so constructed has tridegree $(2,2,2)$, which in turn causes the verification of its claimed derivative integrability cutoff to involve lengthy computations. Thus, \cite[Question 3]{BPS22} asked whether there exist tridegree $(n_1,n_2,1)$ RIFs manifesting the same phenomenon. The example below answers this in the affirmative, in all dimensions $d=3,4,5,\ldots$.
\begin{ex}\label{dpowerex}
For $d\geq 2$ fixed and $N\in \mathbb{N}$, we consider the RIF in \eqref{faveRIF} and its associated compositions RIFs $\phi^N_{d}=\tilde{p}_{d,N}/p_{d,N}$, all of degree $1$ in $z_d$. Reading off the slice matrix of $\phi_d$ from \eqref{faveRIF}, we check that $M_{\phi_d}(\vec{1})^N$ has non-zero entries. This means there are no common factors vanishing at $\vec{1}$ to cancel in $\phi_d^N$, and we can proceed as in Theorem \ref{thm:main}.

In \cite[Example 2.5]{BPS22}, it was shown that near $(1,\ldots, 1)\in \mathbb{T}^{d-1}$,
\[1-|\psi^0(e^{i\theta_1}, \ldots, e^{i\theta_{d-1}})|^2\asymp \sum_{k=1}^{d-1}\theta_k^2,\]
and hence 
\[\left[|\tilde{p}_1(e^{i\theta_1}, \ldots, e^{i\theta_{d-1}})|^2-|\tilde{p}_2(e^{i\theta_1}, \ldots, e^{i\theta_{d-1}})|^2\right]^N\asymp \left(\sum_{k=1}^{d-1}\theta_k^2\right)^N.\]
Then, $\frac{\partial \phi_{d,N}}{\partial z_d}\in L^{\p}(\mathbb{T}^d)$ if and only if 
\[\int_{B_{\epsilon}(\vec{0})}\left(\sum_{k=1}^{d-1}\theta_k^2\right)^{N(1-\p)}dm\]
converges for small $\epsilon>0$. In $d-1$-dimensional polar coordinates, this corresponds to the convergence of
\[\int_0^{1}r^{2N(1-\p)}r^{d-2}dr=\int_0^1r^{2N+d-2-2N\p}dr.\]
This integral is finite if and only if $-1<2N+d-2-2N\p$, and hence we obtain the $z_d$-derivative integrability cutoffs
\begin{equation}
\p^*(d,N)=1+\frac{d-1}{2N}.
\label{dNcutoffs}
\end{equation}
This extends the work in \cite[Example 2.5]{BPS22}, where it was shown that $\p^*(d,1)=1+(d-1)/2=(d+1)/2$. Next, since $\p^*(2,N)=1+1/(2N)$, we observe that any contact order is realized by a RIF with a unique singularity at $(1,1)\in \mathbb{T}^2$. 

Finally, the fact that $\p^*(3,2)=3/2$ shows that $\phi_{3,2}$, with denominator
\[p_{3,2}(z)=9-6z_1-6z_2-3z_3+z_1^2 +z_2^2 + 3 z_1z_2 + 2 z_1 z_3 + 2 z_2z_3 -3 z_1z_2z_3 ,\]
provides an example of a RIF providing a positive answer to \cite[Question 3]{BPS22}.
\end{ex}
\section*{Acknowledgments}
I am grateful to my coauthors on \cite{BKPS, ST22}, as well as J. Raissy and L. Vivas, for many interesting conversations. Thanks are also due to L. Bergqvist for helpful comments.
     
\end{document}